\newtheorem{theorem}{Theorem}[section]
\newtheorem{lemma}[theorem]{Lemma}
\theoremstyle{definition}
\theoremstyle{remark}
\numberwithin{equation}{section}
\begin{document}
\setcounter{page}{1}

%%%%%%%%%%%%%%%%%%%%%%%%%%%%%%%%%%%%%%%%%%%%%%%
%% Please do not remove the following statement.
%%%%%%%%%%%%%%%%%%%%%%%%%%%%%%%%%%%%%%%%%%%%%%%
%\noindent \textbf{{\footnotesize \\ Vol. XX, No XX, (201X), pp XX-XX}}\\[1.00in]
%%%%%%%%%%%%%%%%%%%%%%%%%%%%%%%%%%%%%%%%%%%%%%%

%%%%%%%%%%%%%%%%%%%%%%%%%%%%%%%%%%%%%%%%%%%%%%%%%%%%%%%%%%%%%%%%%%%%%
% Insert title of your article. Note: \title[short title]{full title}
%%%%%%%%%%%%%%%%%%%%%%%%%%%%%%%%%%%%%%%%%%%%%%%%%%%%%%%%%%%%%%%%%%%%%
\title[Commutative weakly nil-neat group rings]{Commutative weakly nil-neat \\ group rings}
%%%%%%%%%%%%%%%%%%%%%%%%%%%%%%%%%%%%%%
% Author's name must be inserted here
%%%%%%%%%%%%%%%%%%%%%%%%%%%%%%%%%%%%%%
\author[M. Samiei]{Mahdi Samiei}
\author[P. Danchev]{Peter Danchev$^{*}$}

%%%%%%%%%%%%%%%%%%%%%%%%
\thanks{{\scriptsize
\hskip -0.4 true cm MSC(2010): 20C07; 16U60; 16S34.
\newline Keywords: group rings, (weakly) nil-clean rings, (weakly) nil-neat rings. \\
$*$Corresponding author}}

%%%%%%%%%%%%%%%%%%%%%%%%%%%%%%%%%%%%%%%%%%%
\begin{abstract}
Let $R$ be a ring and let $G$ be a group. We prove a rather curious necessary and sufficient condition for the commutative group ring $RG$ to be {\it weakly nil-neat} only in terms of $R$,$G$ and their sections. This somewhat expands three recent results, namely those established by McGovern et al. in (J. Algebra Appl., 2015), by Danchev-McGovern in (J. Algebra, 2015) and by the present authors in (J. Math. Univ. Tokushima, 2019), related to commutative nil-clean, weakly nil-clean and nil-neat group rings, respectively.
\end{abstract}

%%%%%%%%%%%%%%%%%%%%%%%
\maketitle
%%%%%%%%%%%%%%%%%%%%%%%

\section{Introduction and the main result}

All given rings are assumed to be commutative rings containing non-zero identity as well as all given groups are assumed to be abelian written multiplicatively. For such a ring $R$, the notation $J(R)$ denotes the Jacobson radical of $R$ and $N(R)$ denotes the nil-radical of $R$ as it is well known that the inclusion $N(R)\subseteq J(R)$ holds, while for such a group $G$, the notation $G_p$ denotes the $p$-torsion component of $G$ for some fixed prime integer $p$. We shall say that $G$ is a $p$-group whenever $G=G_p$. Suppose $RG$ is now the group ring of $G$ over $R$ defined as usual.

A ring $R$ is called {\it nil-clean} if each its element is a sum of a nilpotent and an idempotent. These rings were intensively investigated by too many authors (see, e.g., \cite{DM} and the bibliography therewith). A valuable criterion for a commutative group ring to be nil-clean was successfully obtained in \cite{M}. Indeed, it was shown there that {\it a commutative group ring $RG$ is nil-clean precisely when $G$ is a $2$-group and $R$ is a nil-clean ring}. Moreover, as a common generalization to this class of rings, in \cite{DM} were supposed for an investigation the so-called {\it weakly nil-clean rings} that are rings whose elements are sums or differences of nilpotents and idempotents. Clearly, all nil-clean rings are weakly nil-clean, while the converse is wrong as the example of the field of three elements $\mathbb{Z}_3$ shows. However, the direct product $\mathbb{Z}_3\times \mathbb{Z}_3$ is manifestly {\it not} weakly nil-clean. A criterion for a commutative group ring to be weakly nil-clean as subsequently extracted there (see Lemma~\ref{1.17} stated below).

On the other vein, the class of {\it nil-neat rings} was defined in \cite{S} as those rings which are the proper homomorphic images of all nil-clean rings. Thereby, nil-clean rings are themselves nil-neat, whereas the converse containment is not valid as \cite[Example 2.10]{S} illustrates. About the "group ring problem", it was established in \cite{SD2} that {\it a commutative group ring $RG$ is nil-neat exactly when $G=\{1\}$ and $R$ is nil-neat, or $G\not=\{1\}$ is a $2$-group and $R$ is a nil-clean ring}. Thus, it is a proper nil-neat ring (that is, it is nil-neat but {\it not} nil-clean) uniquely when $G$ is the trivial group.

On the other hand, by analogy, the class of {\it weakly nil-neat rings} was naturally introduced in \cite{SD1} as those rings which are the proper homomorphic images of all weakly nil-clean rings. Evidently, all weakly nil-clean and nil-neat rings are weakly nil-neat, but the reverse implication is untrue as the example $\mathbb{Z}_3\times \mathbb{Z}_3$ demonstrates.

So, our motivation in writing up this brief research note is to extend the listed above achievements concerning the cited above "group ring problem" to this new point of view. In fact, we will prove that the group ring $RG$ is weakly nil-neat in the way indicated in \cite{SD2} by showing that $RG$ is either proper weakly nil-neat (in the sense that it is weakly nil-neat but {\it not} weakly nil-clean) only when $G=\{1\}$, or it is weakly nil-clean otherwise. Some other closely related to that type results can be found in \cite{U}, although there is no a final result of this type received there.

\medskip

The next three statements are useful for our further considerations and applicable purposes.

\begin{lemma}(\cite{DM}, Theorem $1.17$)\label{1.17}
Let $R$ be a ring. The following four statements are equivalent:
\begin{itemize}
\item[(1)]
$R$ is a weakly nil-clean ring.
\item[(2)]
$R$ is zero-dimensional and there is at most one maximal ideal of $R$, say $M$, which satisfies $R/M\cong\mathbb{Z}_3$ while for all other maximal ideals $N$ of $R$ we have $R/N\cong\mathbb{Z}_2$.
\item[(3)]
$R/N(R)$ is isomorphic to either a boolean ring, or to $\mathbb{Z}_3$, or to the product of two such rings.
\item[(4)]
$J(R)$ is nil and $R/J(R)$ is isomorphic to either a boolean ring, or to $\mathbb{Z}_3$, or to the product of two such rings. In particular, $J(R)=N(R)$.
\end{itemize}
\end{lemma}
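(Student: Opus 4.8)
\emph{Sketch of proof.} The plan is to reduce the four-way equivalence to a single structural fact about \emph{reduced} rings and then read off each of $(1)$--$(4)$ from it; throughout, ``the product of two such rings'' in $(3)$ and $(4)$ is to be read as the product of a boolean ring with $\mathbb{Z}_3$, which is in any case forced since $\mathbb{Z}_3\times\mathbb{Z}_3$ is not weakly nil-clean. Begin with two routine reductions. First, a homomorphic image of a weakly nil-clean ring is weakly nil-clean, since images of nilpotents (resp.\ idempotents) remain nilpotent (resp.\ idempotent). Second, if $I\subseteq R$ is a nil ideal then $R$ is weakly nil-clean if and only if $R/I$ is; for the nontrivial direction, take $a\in R$, write $\overline a=\pm\overline e+\overline n$ in $R/I$ with $e$ idempotent and $n$ nilpotent, lift $\overline e$ to an idempotent $e\in R$ (idempotents lift modulo nil ideals), and observe that $a\mp e\in I$ is nilpotent, so $a=\pm e+(a\mp e)$. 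Since $N(R)$ is a nil ideal, $(1)$ is equivalent to the assertion that $R/N(R)$ is weakly nil-clean. Finally, record that $\mathbb{Z}_3\times\mathbb{Z}_3$ is not weakly nil-clean: it is reduced, its idempotents are the pairs with both entries in $\{0,1\}$, and $(1,-1)$ is not of the form $\pm$ such a pair.

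\emph{Core claim.} A reduced ring $R$ is weakly nil-clean if and only if $R$ is boolean, or $R\cong\mathbb{Z}_3$, or $R\cong B\times\mathbb{Z}_3$ for some nonzero boolean ring $B$. Sufficiency is a direct verification that every element of such a ring equals $\pm$ an idempotent. For necessity, note that in a reduced ring ``weakly nil-clean'' says exactly that every element is $\pm$ an idempotent, and apply this to $2=1+1$. If $2$ or $-2$ is idempotent, then $4=\pm 2$ yields $2=0$, so $R$ has characteristic $2$, ``$\pm$ idempotent'' collapses to ``idempotent,'' and $R$ is boolean. Otherwise $f:=-2$ is a nonzero idempotent; set $e:=1-f=3$, note that $e,f$ are complementary orthogonal idempotents, and split $R\cong eR\times fR$. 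From $f^2=f$ one gets $6=0$, and a componentwise computation of characteristics gives $\operatorname{char}(fR)=3$ and $\operatorname{char}(eR)=2$; thus $eR$ is reduced of characteristic $2$ with every element idempotent, i.e.\ boolean. As for $fR$, it is reduced of characteristic $3$ with every element $\pm$ an idempotent, and it has no nontrivial idempotent $g$: writing $1+g=\pm h$ with $h^2=h$ and using $(1+g)^2=1+2g+g^2=1+3g=1$ forces $h=h^2=1$, whence $g\in\{0,1\}$. Hence $fR$ has only the idempotents $0$ and $1$, so its elements lie in the three-element set $\{0,1,-1\}$ and $fR\cong\mathbb{Z}_3$. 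Allowing either factor to degenerate to the zero ring yields the three listed forms.

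Now assemble the equivalence. By the reductions, $(1)$ is equivalent to ``$R/N(R)$ is weakly nil-clean,'' which by the core claim is precisely $(3)$. For $(3)\Rightarrow(4)$: each of the three forms of $R/N(R)$ is von Neumann regular, so $J(R/N(R))=0$ and hence $J(R)=N(R)$; as the nilradical is always nil, $J(R)$ is nil, and $R/J(R)=R/N(R)$ has the stated form. Conversely $(4)\Rightarrow(3)$: $J(R)$ nil forces $J(R)=N(R)$ (using $N(R)\subseteq J(R)$ always), so $R/N(R)=R/J(R)$ has the required form. For $(3)\Rightarrow(2)$: from $(3)$, $R/N(R)$ is von Neumann regular, hence zero-dimensional, so $R$ is zero-dimensional, and the maximal ideals of $R$ correspond to those of $R/N(R)$ with the same residue fields, which by inspection of the three forms are all $\cong\mathbb{Z}_2$ except at most one $\cong\mathbb{Z}_3$. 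Conversely $(2)\Rightarrow(3)$: zero-dimensionality of $R$ gives $N(R)=J(R)=\bigcap(\text{maximal ideals})$, so $\overline R:=R/N(R)$ is reduced and zero-dimensional, hence von Neumann regular; write $2\overline R=e\overline R$ with $e$ idempotent and split $\overline R\cong e\overline R\times(1-e)\overline R$. A maximal ideal $P$ of $\overline R$ contains $e$ if and only if $2\in P$, i.e.\ (since the residue fields are $\mathbb{Z}_2$ or $\mathbb{Z}_3$) if and only if $\overline R/P\cong\mathbb{Z}_2$; hence $e\overline R$, being von Neumann regular with at most one maximal ideal, is either $0$ or a field $\cong\mathbb{Z}_3$, while $2(1-e)\in e\overline R\cap(1-e)\overline R=0$ shows $(1-e)\overline R$ has characteristic $2$ with all residue fields $\cong\mathbb{Z}_2$, hence is boolean. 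Thus $\overline R$ is boolean, $\cong\mathbb{Z}_3$, or of the form $B\times\mathbb{Z}_3$, which is $(3)$.

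The technical heart is the reduced classification of the second paragraph, and within it the rigidity that a reduced weakly nil-clean ring of characteristic $3$ admits no nontrivial idempotent --- this is what pins such a ring down to $\mathbb{Z}_3$ and produces the clean decomposition $B\times\mathbb{Z}_3$. The remaining ingredients --- lifting idempotents modulo nil ideals, the von Neumann regular splitting off of the $\mathbb{Z}_3$ summand, and the bookkeeping behind $(3)\Leftrightarrow(4)\Leftrightarrow(2)$ --- are standard or purely formal.
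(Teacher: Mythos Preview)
The paper does not prove this lemma at all: it is quoted as Theorem~1.17 of \cite{DM} and used as a black box throughout. There is therefore no in-paper argument to compare your sketch against; you are supplying a proof where the paper deliberately gives none.

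Your argument is essentially correct, with one expository glitch in the core claim. You write ``If $2$ or $-2$ is idempotent, then $4=\pm 2$ yields $2=0$,'' and then ``Otherwise $f:=-2$ is a nonzero idempotent.'' But since every element of a reduced weakly nil-clean ring is $\pm$(idempotent), one of $2,-2$ is \emph{always} idempotent, so your ``Otherwise'' clause is vacuous as stated; and if it is $-2$ that is idempotent, $4=-2$ gives only $6=0$, not $2=0$. The split you want is: if $2$ is idempotent then $4=2$ forces $2=0$ and $R$ is boolean; otherwise $-2$ is idempotent, and either $-2=0$ (boolean again) or $f:=-2$ is a nonzero idempotent with $6=0$. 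From there your decomposition $R\cong eR\times fR$ with $e=3=1-f$, the characteristic computation, and the pretty rigidity argument that a reduced weakly nil-clean ring of characteristic~$3$ has no nontrivial idempotent (hence is $\mathbb{Z}_3$) all go through cleanly. The remaining equivalences $(2)\Leftrightarrow(3)\Leftrightarrow(4)$ via von Neumann regularity of $R/N(R)$ are handled correctly, and your reading of ``the product of two such rings'' as $B\times\mathbb{Z}_3$ rather than $\mathbb{Z}_3\times\mathbb{Z}_3$ matches the paper's own observation that $\mathbb{Z}_3\times\mathbb{Z}_3$ is not weakly nil-clean.
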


The following assertion is a simple restatement in an equivalent form of the original result, which setting will be more convenient for us.

\begin{lemma}(\cite{DM}, Theorem $2.1$)\label{2.1}
Let $R$ be a ring and $G$ a group. The group ring $RG$ is weakly nil-clean if, and only if, exactly one of the following three conditions is satisfied:
\begin{itemize}
\item[(1)]
$R$ is nil-clean and $G$ is a non-trivial torsion $2$-group;
\item[(2)]
$R$ is weakly nil-clean with $3\in N(R)$ (or, equivalently, $R/N(R)\cong \mathbb{Z}_3$) and $G$ is a non-trivial torsion $3$-group;
\item[(3)]
$R$ is weakly nil-clean and $G$ is trivial.
\end{itemize}
\end{lemma}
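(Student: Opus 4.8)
The plan is to derive Lemma~\ref{2.1} from \cite[Theorem~2.1]{DM} by recognising its three clauses as a mutually exclusive reformulation of the original hypotheses, the translation being powered entirely by Lemma~\ref{1.17}; I will also indicate a self-contained argument, since the underlying content is what matters.

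The dictionary works as follows. Let $R$ be weakly nil-clean. By Lemma~\ref{1.17}, $R/N(R)$ is boolean, or $\mathbb{Z}_3$, or the product of a boolean ring with $\mathbb{Z}_3$; among these only $\mathbb{Z}_3$ satisfies $3=0$, because $1+1+1=1$ is a unit of every boolean ring and $3$ therefore stays a unit in any boolean factor of a product. Hence $3\in N(R)$ holds if and only if $R/N(R)\cong\mathbb{Z}_3$. Moreover $J(R)=N(R)$ is nil by Lemma~\ref{1.17}(4), so $R$ is nil-clean exactly when in addition $R/J(R)=R/N(R)$ is boolean; that is, $R$ is nil-clean if and only if $R$ is weakly nil-clean with $R/N(R)$ boolean. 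These two equivalences rewrite ``$R$ nil-clean'' as ``$R$ weakly nil-clean with $R/N(R)$ boolean'' and ``$3\in N(R)$'' as ``$R/N(R)\cong\mathbb{Z}_3$'', which is exactly the passage between \cite[Theorem~2.1]{DM} and the statement here; ``torsion $p$-group'' merely records the convention $G=G_p$ used in this paper. As for the word ``exactly'': a non-trivial torsion abelian group is never both a $2$-group and a $3$-group, so (1) and (2) conflict on the group side; (1) makes $3$ a unit of the non-zero ring $R$ while (2) puts $3\in N(R)$, so they conflict on the ring side; and (3) is the case $G=\{1\}$, which (1) and (2) both exclude. Since \cite[Theorem~2.1]{DM} already guarantees that some clause holds when $RG$ is weakly nil-clean, exactly one does.

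A proof not quoting \cite[Theorem~2.1]{DM} would go like this. For sufficiency: (3) is trivial; under (1), $RG$ is even nil-clean by the criterion of \cite{M}; under (2), $N(R)G$ is a nil ideal with $RG/N(R)G\cong\mathbb{Z}_3G$, whose augmentation ideal is nil because $G$ is a torsion $3$-group, whence $RG/N(RG)\cong\mathbb{Z}_3$ and Lemma~\ref{1.17}(3) finishes. For necessity: $R$ is weakly nil-clean as the augmentation image of $RG$, and if $G\neq\{1\}$ then the unit group of $RG/N(RG)$ has order at most $2$ by Lemma~\ref{1.17}, which forces $G$ to be torsion (an element of infinite order would make $g^{2}-1$ a non-nilpotent element of the augmentation ideal, the powers of $g^{2}$ being $R$-independent). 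Every residue field of $RG$ is then $\mathbb{Z}_2$ or $\mathbb{Z}_3$. If $R/N(R)$ is boolean and $g\in G$ has odd prime order $p$, then $p$ is a unit of $R$ and $R\langle g\rangle\cong R\times R[x]/(\Phi_p(x))$, so $g-1\notin N(RG)$ and some residue field of $RG$ contains $\mathbb{F}_2(\zeta_p)=\mathbb{F}_{2^{\,\mathrm{ord}_p(2)}}$ with $\mathrm{ord}_p(2)\geq 2$ --- impossible; similarly, if $R/N(R)\cong\mathbb{Z}_3$ and $g$ has prime order $p\notin\{2,3\}$, one reaches $\mathbb{F}_{3^{\,\mathrm{ord}_p(3)}}$ with $\mathrm{ord}_p(3)\geq 2$ --- impossible; and if $R/N(R)\cong\mathbb{Z}_3$ while $G$ has an element of order $2$, then, using the primary decomposition of the torsion group $G$, one exhibits $R\times R$ --- a ring whose reduced quotient $\mathbb{Z}_3\times\mathbb{Z}_3$ is not weakly nil-clean by Lemma~\ref{1.17} --- as a homomorphic image of $RG$, again impossible. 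Hence $G$ is a $2$-group (so we are in (1), a weakly nil-clean ring with boolean $R/N(R)$ being nil-clean) or a $3$-group (so we are in (2)).

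The main obstacle is concentrated in that last paragraph, not in the reconciliation: the reconciliation route needs only the bookkeeping above, which is exactly why the result is called ``a simple restatement''. In the self-contained route, the delicate point is the prime-elimination in the necessity half --- it rests on the splitting of $\mathbb{F}_q[x]/(x^p-1)$ into fields of degree $\mathrm{ord}_p(q)$ over $\mathbb{F}_q$, on the elementary bounds $\mathrm{ord}_p(2)\geq 2$ for every odd prime $p$ and $\mathrm{ord}_p(3)\geq 2$ for every prime $p\geq 5$, and, for the order-$2$ case over a ring with $R/N(R)\cong\mathbb{Z}_3$, on spotting the forbidden homomorphic image $\mathbb{Z}_3\times\mathbb{Z}_3$ via the primary decomposition of $G$.
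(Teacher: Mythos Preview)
The paper does not prove this lemma: it is quoted from \cite{DM} with the remark that it is ``a simple restatement in an equivalent form'' of the original. Your first two paragraphs carry out precisely that restatement, and the dictionary you give via Lemma~\ref{1.17} (identifying ``$R$ nil-clean'' with ``$R/N(R)$ boolean'' and ``$3\in N(R)$'' with ``$R/N(R)\cong\mathbb{Z}_3$'', together with the mutual-exclusivity check) is exactly what the paper is gesturing at. On the part that matters you and the paper agree.

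Your optional self-contained sketch goes beyond anything the paper does and is largely sound, but two points deserve care. First, the necessity argument treats only $R/N(R)$ boolean and $R/N(R)\cong\mathbb{Z}_3$; the mixed case $R/N(R)\cong B\times\mathbb{Z}_3$ with $B$ nontrivial is not handled explicitly (it follows by passing to the quotients $BG$ and $\mathbb{Z}_3G$ and combining both restrictions, forcing $G=\{1\}$, but this should be said). Second, when $R/N(R)\cong\mathbb{Z}_3$ and $g\in G$ has order $2$, your appeal to the primary decomposition to exhibit $R\times R$ as a \emph{quotient} of $RG$ tacitly needs $\mathbb{Z}_2$ to be a quotient of $G_2$, which fails if $G_2$ is divisible (e.g.\ a Pr\"ufer $2$-group). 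A clean repair: since $2\in R^{\times}$, the element $e=(1+g)/2$ is a nontrivial idempotent of $RG$, while $3\in N(R)\subseteq N(RG)$ forces $RG/N(RG)\cong\mathbb{Z}_3$ by Lemma~\ref{1.17}, contradicting the existence of a nontrivial idempotent.
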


The following affirmation is pivotal.

\begin{lemma}(\cite{SD1}, Theorem $2.7$)\label{2.7}
A ring $R$ is weakly nil-neat if, and only if, either $R$ is a field, or $R/J(R)$ is isomorphic to either a boolean ring, or $\mathbb{Z}_3$, or the direct product of two such rings when $J(R)\neq 0$, or $R$ is isomorphic to a subring of a direct product of copies of $\mathbb{Z}_2$ and/or at most two copies of $\mathbb{Z}_3$ when $J(R)=0$ and, moreover, every nonzero prime ideal of $R$ is maximal.
\end{lemma}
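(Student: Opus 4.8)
The plan is to unwind the definition --- $R$ is weakly nil-neat exactly when every \emph{proper} homomorphic image $R/I$ (that is, $0\neq I\trianglelefteq R$) is weakly nil-clean --- and to push each such quotient through the structure theorem of Lemma~\ref{1.17}. I would begin by isolating two elementary observations used repeatedly below. First, a weakly nil-clean \emph{domain} is a field: by Lemma~\ref{1.17}(3), such a domain $D$ coincides with $D/N(D)$ and hence is boolean, $\mathbb{Z}_3$, or a product of two such rings; a domain is not a nontrivial direct product and a boolean domain is $\mathbb{Z}_2$, so $D\cong\mathbb{Z}_2$ or $D\cong\mathbb{Z}_3$. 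Second, if $R$ is weakly nil-neat and $0\neq I\trianglelefteq R$, then every prime of $R/I$ is the image of a \emph{nonzero} prime of $R$; in particular, applying the first observation to $R/P$ shows every nonzero prime of $R$ is maximal, and, more usefully, $R/I$ is zero-dimensional, so $J(R/I)=N(R/I)$ and this radical is nil.

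For the forward implication, let $R$ be weakly nil-neat. If $R$ is a field we are in the first case and there is nothing to prove, so assume $R$ is not a field; then every nonzero prime of $R$ is maximal by the above. Suppose $J(R)\neq 0$. Then $R/J(R)$ is itself a proper homomorphic image of $R$, hence weakly nil-clean, and since $N(R/J(R))\subseteq J(R/J(R))=0$ it equals its own reduction, so Lemma~\ref{1.17}(3) forces $R/J(R)$ to be boolean, $\mathbb{Z}_3$, or a product of two such rings --- the middle case. Suppose instead $J(R)=0$. Since $\bigcap_{M\in\mathrm{Max}(R)}M=J(R)=0$, the canonical map $R\hookrightarrow\prod_{M}R/M$ is injective; as $R$ is not a field no maximal ideal is $0$, so each $R/M$ is a proper quotient, hence a weakly nil-clean field, hence $\cong\mathbb{Z}_2$ or $\cong\mathbb{Z}_3$. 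It remains to bound by two the number of $M$ with $R/M\cong\mathbb{Z}_3$: if $M_1,M_2,M_3$ were three such, the Chinese Remainder Theorem would give $R/(M_1\cap M_2\cap M_3)\cong\mathbb{Z}_3\times\mathbb{Z}_3\times\mathbb{Z}_3$, which is \emph{not} weakly nil-clean by Lemma~\ref{1.17}(2) (three maximal ideals with residue field $\mathbb{Z}_3$). If $M_1\cap M_2\cap M_3\neq 0$ this directly contradicts weak nil-neatness; if $M_1\cap M_2\cap M_3=0$ then $R\cong\mathbb{Z}_3^{3}$, which has the non--weakly-nil-clean proper quotient $\mathbb{Z}_3^{2}$, again a contradiction. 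Hence $R$ embeds in a direct product of copies of $\mathbb{Z}_2$ and at most two copies of $\mathbb{Z}_3$, the last case.

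For the converse I would check each alternative. A field has only the zero ring as a proper quotient, so it is trivially weakly nil-neat. Next, assume every nonzero prime of $R$ is maximal, $J(R)\neq 0$, and $R/J(R)$ is boolean, $\mathbb{Z}_3$, or a product of two such; fix $0\neq I\trianglelefteq R$. Then $R/I$ is zero-dimensional, so $J(R/I)$ is nil; and $(R/I)/J(R/I)\cong R/K$ for $K=\bigcap\{M\in\mathrm{Max}(R):M\supseteq I\}$, which contains $J(R)$, so $R/K$ is a homomorphic image of $R/J(R)$ and therefore is again boolean, $\mathbb{Z}_3$, or a product of two such (quotients of boolean rings are boolean and quotients of $\mathbb{Z}_3$ are $\mathbb{Z}_3$ or $0$). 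Lemma~\ref{1.17}(4) then yields $R/I$ weakly nil-clean. Finally, assume $J(R)=0$ and $R$ embeds in a direct product of copies of $\mathbb{Z}_2$ and at most two copies of $\mathbb{Z}_3$; then $R$ satisfies $x^{3}=x$, whence $6\cdot 1_R=0$ and $e:=3\cdot 1_R$ is a central idempotent, giving $R\cong eR\times(1-e)R$ with $eR$ boolean and $(1-e)R$ an $\mathbb{F}_3$-algebra satisfying $x^3=x$ that embeds in at most two copies of $\mathbb{Z}_3$. One then verifies, via Lemma~\ref{1.17}(2)--(3) and the Chinese Remainder Theorem, that every nonzero proper quotient of such an $R$ stays within the admissible list and so is weakly nil-clean; this is the delicate bookkeeping point, since one must ensure no quotient isomorphic to a $\mathbb{Z}_3\times\mathbb{Z}_3$-times-nonzero-boolean ring (which fails Lemma~\ref{1.17}(2)) is ever produced, and it is precisely this that dictates the exact interplay between the boolean factors and the at most two copies of $\mathbb{Z}_3$ in the hypothesis.

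The step I expect to be the main obstacle is this last ($J(R)=0$) branch: one must control the number of maximal ideals with residue field $\mathbb{Z}_3$ not merely for $R$ itself but stably under passage to \emph{every} proper quotient, and it is the interaction of the Chinese Remainder Theorem with the rigid lists of Lemma~\ref{1.17}(2)--(3) --- concretely, that $\mathbb{Z}_3\times\mathbb{Z}_3$ is (only just) weakly nil-neat while $\mathbb{Z}_3\times\mathbb{Z}_3\times\mathbb{Z}_3$ and $\mathbb{Z}_2\times\mathbb{Z}_3\times\mathbb{Z}_3$ are not --- that pins down the precise statement. By contrast the $J(R)\neq 0$ branch is comparatively soft: once one notices that proper quotients of $R$ are automatically zero-dimensional, and hence have nil Jacobson radical, it closes with a single appeal to Lemma~\ref{1.17}(4).
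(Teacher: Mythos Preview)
The paper does not prove this lemma: it is quoted verbatim from \cite{SD1} (Theorem~2.7 there) and used as a black box, so there is no argument in the present paper to compare your proposal against.

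On its own merits, your forward implication is clean and correct: the key observations (that a weakly nil-clean domain is $\mathbb{Z}_2$ or $\mathbb{Z}_3$, that nonzero primes are maximal, and the Chinese Remainder bound on the number of residue fields $\mathbb{Z}_3$) are exactly what one needs.

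The converse, however, cannot be completed as written, and the obstacle you flag is real rather than merely ``delicate bookkeeping''. Take $R=\mathbb{Z}_2\times\mathbb{Z}_3\times\mathbb{Z}_3$. Then $J(R)=0$, every nonzero prime is maximal, and $R$ is (trivially) a subring of a direct product of one copy of $\mathbb{Z}_2$ and two copies of $\mathbb{Z}_3$, so $R$ satisfies the hypotheses of the $J(R)=0$ clause. But the proper quotient $R/(\mathbb{Z}_2\times 0\times 0)\cong\mathbb{Z}_3\times\mathbb{Z}_3$ is \emph{not} weakly nil-clean (two maximal ideals with residue field $\mathbb{Z}_3$, contradicting Lemma~\ref{1.17}(2)), so $R$ is not weakly nil-neat. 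Thus the lemma, exactly as transcribed here, has a false converse; your decomposition $R\cong eR\times(1-e)R$ with $eR$ boolean and $(1-e)R\hookrightarrow\mathbb{Z}_3^{\le 2}$ is correct, but the claim that every proper quotient then lands in the admissible list fails precisely at this example. The resolution is that the $J(R)=0$ clause should additionally forbid a nonzero boolean component whenever two copies of $\mathbb{Z}_3$ are present (equivalently: if $R$ has two maximal ideals with residue field $\mathbb{Z}_3$, then $R\cong\mathbb{Z}_3\times\mathbb{Z}_3$); presumably the original statement in \cite{SD1} carries such a restriction, and the version recorded in this paper is an imprecise paraphrase. Since the main theorem of the present paper uses only the forward direction of Lemma~\ref{2.7}, this defect does not affect the paper's results.
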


There exists a (weakly) nil-neat ring which is {\it not} (weakly) nil-clean. In fact, as already noted above, the double direct product $\mathbb{Z}_3\times \mathbb{Z}_3$ will be workable for this aim.

\medskip

A crucial formula in our argumentation will be the following one due to Karpilovsky (see \cite{K}): For a ring $R$ and a group $G$, the next equality is true:

$$
J(RG)=J(R)G+\langle r(g_p-1) ~ | ~ r\in R, pr\in J(R), g_p\in G_p\rangle.
$$

\medskip

So, we are now ready to establish the following criterion for the group ring to be weakly nil-neat, which sounds like this.

\begin{theorem}
Let $R$ be a ring and $G$ a group. The group ring $RG$ is weakly nil-neat if, and only if, exactly one of the following four conditions is satisfied:
\begin{itemize}
\item[(1)]
$G$ is trivial and $R$ is weakly nil-neat.
%\item[(2)]
%$G$ is non-trivial such that $G$ is a torsion $2$-group and $R$ is weakly nil-clean.
\item[(2)]
$G$ is non-trivial such that $G$ is a torsion $2$-group and $R$ is nil-clean (in fact, $R/N(R)$ is boolean).
\item[(3)]
$G$ is non-trivial such that $G$ is a torsion $3$-group and $R$ is weakly nil-clean with $3\in N(R)$ (in fact, $R/N(R)$ is isomorphic to $\mathbb{Z}_3$).
\item[(4)]
$G$ is cyclic of order $2$ and $R\cong \mathbb{Z}_3$.
\end{itemize}
\end{theorem}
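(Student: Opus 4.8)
The plan is to prove the two implications separately; the forward (characterisation) direction is the substantial one.

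\emph{Sufficiency.} In case (1), $RG=R$. In cases (2) and (3) the hypotheses on $(R,G)$ are precisely conditions (1) and (2) of Lemma~\ref{2.1}, so $RG$ is weakly nil-clean, hence weakly nil-neat. For (4) one first checks that $\mathbb{Z}_3C_2\cong\mathbb{Z}_3\times\mathbb{Z}_3$ — by the Chinese Remainder Theorem applied to $x^2-1=(x-1)(x+1)$ in $\mathbb{Z}_3[x]$ (the factors being comaximal, as $(x+1)-(x-1)=2$ is a unit), equivalently via the complementary orthogonal idempotents $2(1+g)$ and $2(1-g)$ — and then $\mathbb{Z}_3\times\mathbb{Z}_3$ is weakly nil-neat because its only proper homomorphic images are $\mathbb{Z}_3$ and the zero ring, both of which are weakly nil-clean (or invoke Lemma~\ref{2.7} directly).

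\emph{Necessity.} Suppose $RG$ is weakly nil-neat. If $G=\{1\}$, then $RG=R$ is weakly nil-neat, which is case (1); so assume $G\neq\{1\}$. The augmentation ideal $\ker(RG\twoheadrightarrow R)$ is nonzero (it contains $g-1$ for $g\neq1$) and proper, so $R$ is a proper homomorphic image of $RG$ and hence weakly nil-clean; by Lemma~\ref{1.17} this gives that $R$ is zero-dimensional, $N(R)=J(R)$ is nil, and $R/N(R)$ is boolean, $\mathbb{Z}_3$, or a product of a boolean ring with $\mathbb{Z}_3$. Now split according to whether $R$ is a field. If $R$ is not a field, fix a maximal ideal $M\neq0$; then $(R/M)G$ is also a proper homomorphic image of $RG$ (its kernel $MG$ contains a nonzero element $m\cdot1$), hence weakly nil-clean, and since $R/M$ is a field with $G\neq\{1\}$, Lemma~\ref{2.1} forces either $R/M\cong\mathbb{Z}_2$ with $G$ a non-trivial torsion $2$-group, or $R/M\cong\mathbb{Z}_3$ with $G$ a non-trivial torsion $3$-group. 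Because $G$ is fixed and cannot be both, the same alternative holds at every maximal ideal, so \emph{all} residue fields of $R$ coincide; in the first case $R/N(R)$ is then a reduced weakly nil-clean ring with only residue field $\mathbb{Z}_2$, hence boolean, so $R$ is nil-clean — case (2); in the second $R/N(R)\cong\mathbb{Z}_3$, i.e.\ $3\in N(R)$ — case (3).

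It remains to handle $R$ a field; being weakly nil-clean it is $\mathbb{Z}_2$ or $\mathbb{Z}_3$, and here I would invoke Karpilovsky's formula, which over a field $k$ of characteristic $p$ gives $J(kG)=\langle h-1\mid h\in G_p\rangle$ and $kG/J(kG)\cong k[G/G_p]$. If $k=\mathbb{Z}_2$: were $G_2=\{1\}$ one would have $J(\mathbb{Z}_2G)=0$, forcing $\mathbb{Z}_2G$ to be reduced with every residue field equal to $\mathbb{Z}_2$ (each is a weakly nil-clean field of characteristic $2$), hence $x^2=x$ throughout and $g=1$ for all $g$, impossible; so $G_2\neq\{1\}$, whence $\mathbb{Z}_2[G/G_2]=\mathbb{Z}_2G/J(\mathbb{Z}_2G)$ is a reduced weakly nil-clean ring of characteristic $2$, so boolean, which forces $G=G_2$ — a non-trivial torsion $2$-group, case (2). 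If $k=\mathbb{Z}_3$ and $G_3\neq\{1\}$: then $\mathbb{Z}_3[G/G_3]$ is a reduced weakly nil-clean ring of characteristic $3$, hence $\cong\mathbb{Z}_3$, forcing $G=G_3$ — a non-trivial torsion $3$-group, case (3). If $k=\mathbb{Z}_3$ and $G_3=\{1\}$: then $\mathbb{Z}_3G$ is reduced with all residue fields $\mathbb{Z}_3$, so $x^3=x$ throughout and $g^2=1$ for every $g\in G$, i.e.\ $G$ is a non-trivial elementary abelian $2$-group; if $|G|\geq4$ then $G$ retracts onto $C_2\times C_2$, so $\mathbb{Z}_3^3$ would be a proper homomorphic image of $\mathbb{Z}_3G$ (namely a coordinate projection of $\mathbb{Z}_3[C_2\times C_2]\cong\mathbb{Z}_3^4$) and hence weakly nil-clean, which it is not — contradiction; therefore $G\cong C_2$, which is case (4).

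The main difficulty here is conceptual rather than computational: a weakly nil-neat group ring need \emph{not} be weakly nil-clean, so one cannot simply apply Lemma~\ref{2.1} to $RG$ itself; the analysis has to be pushed down to residue fields (where Lemma~\ref{2.1} and Karpilovsky's formula become available), and the one genuinely new ring — the weakly nil-neat but not weakly nil-clean $\mathbb{Z}_3C_2\cong\mathbb{Z}_3\times\mathbb{Z}_3$ — must be isolated by the ad hoc $\mathbb{Z}_3^4\twoheadrightarrow\mathbb{Z}_3^3$ obstruction. Finally, conditions (1)--(4) are pairwise incompatible — (1) is the only one with $G$ trivial, (2) and (3) require incompatible torsion groups, and in (4) the base is not nil-clean and $G$ is not a torsion $3$-group — so "exactly one" holds, completing both directions.
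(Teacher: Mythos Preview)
Your proof is correct and somewhat more streamlined than the paper's. The paper splits the forward direction according to whether $R$ is reduced or not, and in the reduced case further treats $R\cong B$, $R\cong\mathbb{Z}_3$, $R\cong B\times\mathbb{Z}_3$ separately (the last requiring its own two-subcase argument to reach a contradiction). Your split into ``$R$ not a field'' versus ``$R$ a field'' absorbs all of the non-field situations at once: by passing to $(R/M)G$ at every maximal ideal $M\neq 0$ you force all residue fields of $R$ to coincide, which immediately pins $R/N(R)$ down to a boolean ring or to $\mathbb{Z}_3$ without having to examine $B\times\mathbb{Z}_3$ by hand. This is a genuine economy.

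In the endgame $R=\mathbb{Z}_3$, $G_3=\{1\}$, the paper invokes Lemma~\ref{2.7} to embed $\mathbb{Z}_3G$ in $\mathbb{Z}_3$ or $\mathbb{Z}_3\times\mathbb{Z}_3$ and then uses a cardinality count $3^{|G|}\le 9$ to get $|G|=2$. You instead deduce $x^3=x$ from the residue fields, conclude $G$ is elementary abelian of exponent $2$, and eliminate $|G|\ge 4$ via the $\mathbb{Z}_3^4\twoheadrightarrow\mathbb{Z}_3^3$ obstruction. Both are valid; the paper's route is marginally quicker here, while yours avoids appealing to the subdirect-product clause of Lemma~\ref{2.7}.

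One small omission: when $R$ is a field and $J(RG)=0$ you use that every residue field of $RG$ is a \emph{proper} quotient (hence weakly nil-clean); this needs $RG$ not to be a field, which you do not say explicitly. It is immediate from the augmentation ideal being nonzero when $G\neq\{1\}$, and the paper likewise leaves this check to the reader, so it does not affect correctness.
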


\begin{proof} "$\Rightarrow$". Suppose that $RG$ is a weakly nil-neat ring. It is easy to verify that $R$ is weakly nil-neat whenever $G$ is trivial, because then $RG\cong R$. So, we shall assume hereafter that $G$ is non-trivial. Hence, considering the standard augmentation map $RG\to R$, one finds that $R$ is weakly nil-clean as a proper homomorphic image of $RG$.

Firstly, if $R$ is not reduced, then by Lemma~\ref{1.17}$(4)$ we have that $J(R)\neq \{0\}$ and that $R/J(R)$ is isomorphic to either $B$, or $\mathbb{Z}_3$, or $B\times \mathbb{Z}_3$, where $B$ is a boolean ring. Thus $R$ has either $B$, or $\mathbb{Z}_3$, or $B\times \mathbb{Z}_3$ as a proper epimorphic image, whence either $BG$, or $\mathbb{Z}_3G$, or $(B\times \mathbb{Z}_3)G$ is such a proper homomorphic image for $RG$. Since $B\times \mathbb{Z}_3$ is obviously weakly nil-clean but not nil-clean and $3\not\in N(B\times \mathbb{Z}_3)=\{0\}$ whenever $B$ is a non-trivial direct factor, that is, $3\not=0$ in this direct product, by combining Lemma~\ref{2.1} and the fact that every proper homomorphic image of a weakly nil-neat ring is weakly nil-clean, we conclude that either $BG$ or $\mathbb{Z}_3G$ is weakly nil-clean. Again by making use of Lemma~\ref{2.1}, we get $G$ is either a non-trivial torsion $2$-group or a non-trivial torsion $3$-group, respectively. About $R$, the only two possibilities were that $R$ is nil-clean (in fact, $R/N(R)\cong B$) or that $R$ is weakly nil-clean with $3\in Nil(R)$ (in fact, $R/N(R)\cong \mathbb{Z}_3$), respectively.

Secondly, if now $R$ is reduced, Lemma~\ref{1.17}$(4)$ shows that $R$ is isomorphic to either $B$, or to $\mathbb{Z}_3$, or to $B\times \mathbb{Z}_3$, where $B$ is a boolean ring. We will prove in what follows that these three mutually exclusive cases lead to the truthfulness of the stated above four conditions in the theorem.

\medskip

\textbf{Case $1$:} $R\cong B$ is boolean.
\begin{itemize}
\item[(1.1)]
If $G_2$ is non-trivial, then $B(G/G_2)$ is a weakly nil-clean ring being a proper homomorphic image of $BG$. In this case, we must have $G/G_2\cong \{1\}$ utilizing Lemma~\ref{2.1}$(3)$ and thus $G=G_2$. Let us notice that Lemma~\ref{2.1}$(1)$ would imply that $G/G_2$ is a non-trivial $2$-group, which is pretty impossible as $G/G_2$ being a $2$-group yields at once that $G$ has to be a $2$-group, that is, $G=G_2$, a contradiction.
\item[(1.2)]
If now $G_2$ is trivial, we can get that $G$ is either torsion-free, that is, $G_t:=\prod_{\forall p} G_p=\{1\}$ or that $G$ is $q$-torsion for some single prime $q$ different from $2$. In fact, if there are two distinct primes $p,q\not= 2$ such that $G_p,G_q$ are non-identity, then both $B(G/G_p)$ and $B(G/G_q)$ will be weakly nil-clean, whence using items (1) or (3) of Lemma~\ref{2.1} we will infer that $G/G_p$ and $G/G_q$ are either non-trivial $2$-groups or that they are simultaneously trivial. In the first possibility, we detect that $G=G_p=G_q$ as both $G_p$ and $G_q$ are $2$-divisible (see \cite{F}) which is wrong contradicting with the non-triviality of the both quotients. In the other case, we again will have that $G=G_p=G_q$ which, in turn, will yield that $G=\{1\}$ as $G_p\cap G_q=\{1\}$, which is in sharp contrary to the assumption that $G\not=\{1\}$ stated at the beginning of our proof.
That is why, one extracts that either $G$ is torsion-free or is a $q$-group, thus substantiating our claim.

So, one follows now by virtue of the listed above explicit formula from \cite[Theorem]{K} that $J(BG)=0$. Indeed, $J(B)=\{0\}$, and since $(q,2)=1$, the integer $q$ will invert in $B$ as $B$ is of characteristic $2$, so that $qr=0$ for any $r\in B$ will imply that $r=0$, as needed.

Furthermore, with Lemma~\ref{2.7} at hand, $BG$ being of characteristic $2$ could be only a boolean ring (as it is isomorphic to a subring of a direct product of copies of $\mathbb{Z}_2$). This immediately assures that, for any $g\in G\subseteq BG$, the equality $g^2=g$ holds, whence $g=1$, which is demonstrably against our initial assumption on the non-triviality of $G$. Notice that $BG$ is unable to be a field, which direct check we leave to the interested reader.
\end{itemize}

\textbf{Case $2$:} $R\cong \mathbb{Z}_3$.
\begin{itemize}
\item[(2.1)]
Suppose that $G_3$ is non-trivial. Then $\mathbb{Z}_3(G/G_3)$ is a weakly nil-clean ring and so either $G/G_3$ is a non-trivial torsion $3$-group by Lemma~\ref{2.1}$(2)$, which is obviously non-sense as it will lead to $G=G_3$ and thus to the absurd that $G/G_3\cong \{1\}$, or $G/G_3$ is trivial by Lemma~\ref{2.1}$(3)$, which leads us to the desired equality $G=G_3$.
\item[(2.2)]
Let $G_3$ be now trivial. We shall show as above in case (1.2) that $G$ is either torsion-free or a $q$-group for some single prime $q$ different to $3$. Suppose, on the contrary, that $G_q$ is not equal to $\{1\}$, for any prime $q$ different to $3$. Consequently, $\mathbb{Z}_3(G/G_q)$ is a weakly nil-clean ring as being a proper homomorphic image of $\mathbb{Z}_3G$. Therefore, Lemma~\ref{2.1}$(2)$ applies to get that $G/G_q$ is a non-trivial $3$-group and so we deduce that $G=G_q$, because each $q$-torsion component is $3$-divisible, i.e., $(G_q)^{3} = G_q$ (see, for instance, \cite{F}). This equality for $G$ contradicts, certainly, to the non-triviality of the factor-group $G/G_q$. That is why, Lemma~\ref{2.1}$(3)$ is now applicable to get that $G/G_q$ has to be the identity group which, indeed, assures that $G=G_q$. But the validity of the equality $G=G_q=G_p$ for two distinct primes $q,p\not= 3$ means that $G=G_p\cap G_q=\{1\}$, so that we find that $G=\{1\}$, which is manifestly untrue. Thus, one infers that either $G_t:=\coprod_{\forall p} G_p=\{1\}$ or $G=G_q$ is $q$-torsion for some integer $q$.

Hence, in view of the already cited above formula in \cite[Theorem]{K}, we derive that $J(\mathbb{Z}_3G)=0$ by taking into account that $J(\mathbb{Z}_3)=\{0\}$ and that $qr=0$ for any $r\in \mathbb{Z}_3$ insures $r=0$ since $q$ is a unit in $\mathbb{Z}_3$.

Further, with Lemma~\ref{2.7} at hand, $\mathbb{Z}_3G$ being of characteristic $3$ could be isomorphic to only $\mathbb{Z}_3$ or to $\mathbb{Z}_3\times \mathbb{Z}_3$; actually, it is worth noticing here that the property of being isomorphic to a subring of a direct product (= subdirect product) can be interpreted as a direct isomorphism. A straightforward check is a guarantor that the elements of these two rings satisfy the equation $x^3=x$. So, in these two cases, we arrive at the fact that, for any $g$ in $G\subseteq \mathbb{Z}_3G$, the equality $g^3=g$ is valid. It equivalently forces that $g^2=1$.

Certainly, if $\mathbb{Z}_3G\cong \mathbb{Z}_3$, we may conclude that $G$ is trivial, which once again contradicts our former assumption. Even something more, $\mathbb{Z}_3G$ is unable to be any field, which direct check we leave to the interested reader.

If now the isomorphism $\mathbb{Z}_3G\cong \mathbb{Z}_3\times \mathbb{Z}_3$ is fulfilled, we can say that $G=\langle g\rangle =\{1,g~|~g^2=1\}$. In fact, $G$ is necessarily finite (as for otherwise, $\mathbb{Z}_3G$ will be infinite which is impossible. Therefore, since $|\mathbb{Z}_3G|=|\mathbb{Z}_3|^{|G|}=|\mathbb{Z}_3\times \mathbb{Z}_3|$, one extracts that $3^{|G|}=9$ yielding that $|G|=2$, as expected.
\end{itemize}

\textbf{Case $3$:} $R\cong B\times\mathbb{Z}_3$.
\begin{itemize}
\item[(3.1)]
Let $G_3\neq \{1\}$. Then the group ring $(B\times\mathbb{Z}_3)(G/G_3)$ is weakly nil-clean as being a proper homomorphic image of $(B\times\mathbb{Z}_3)G$ and so $G/G_3$ has to be trivial in virtue of Lemma~\ref{2.1}$(3)$. So, $G=G_3$, as required. Consequently, $(B\times \mathbb{Z}_3)G_3\cong BG_3\times \mathbb{Z}_3G_3$ being a weakly nil-neat ring insures that both $BG_3$ and $\mathbb{Z}_3G_3$ are weakly nil-clean rings as proper epimorphic images of the whole weakly nil-neat ring. However, according to Lemma~\ref{2.1}(1), $BG_3$ cannot be such a ring, so that this case is unavailable.

It is worthwhile noticing that the same conclusion may be drawn provided $G_2\neq \{1\}$, as in that case $\mathbb{Z}_3G_2$ need not be weakly nil-clean owing to Lemma~\ref{2.1}(2).

Another approach might be that we definitely will have that $G=G_2=G_3\not= \{1\}$, which is an absurd since $G_2\cap G_3=\{1\}$.
\item[(3.2)]
Assume now that $G_2=G_3=\{1\}$. Again adapting the idea of the previous point quoted above, we shall detect that $J((B\times \mathbb{Z}_3)G)=\{0\}$. Indeed, in order to show that, one needs to get that either $G$ is torsion-free or is a $q$-group for some single prime $q$ different to $2$ and $3$. For otherwise, assume $G_q$ is not equal to $\{1\}$. Thus, $(B\times\mathbb{Z}_3)(G/G_q)$ is weakly nil-clean as being a proper homomorphic image of $(B\times\mathbb{Z}_3)G$. It follows from Lemma \ref{2.1}$(3)$ that $G/G_q$ has to be trivial, i.e., $G=G_q$. Thus, if there exist two such different primes $p$ and $q$ that $G=G_p=G_q$, the group $G$ must be trivial, because $G_q\cap G_p=\{1\}$. This is, of course, a contradiction with our frontier's assumption that $G\not=\{1\}$. Thereby, the claim about $G$ sustained.

Furthermore, with the given above pivotal formula from \cite{K} at hand, we may repeat the same trick already demonstrated above to get that the group ring $(B\times \mathbb{Z}_3)G$ is semi-primitive (= semi-simple in the sense of Jacobson) as $J(B\times \mathbb{Z}_3)\cong J(B)\times J(\mathbb{Z}_3)=\{0\}$ and $qr=0$ for any $r\in B\times \mathbb{Z}_3$ enables us that $r=0$, because the characteristic of $B\times \mathbb{Z}_3$ is exactly $6$, as required.

After bearing that in mind, we may write in accordance with Lemma~\ref{2.7} that $(B\times \mathbb{Z}_3)G$ is isomorphic to a subring of one of direct products $B'\times \mathbb{Z}_3$ and $B'\times \mathbb{Z}_3\times \mathbb{Z}_3$, where $B'$ is a non-zero boolean ring. Nevertheless, as observed above, $(B\times \mathbb{Z}_3)G\cong BG\times \mathbb{Z}_3G$ whence both direct factors $BG$ and $\mathbb{Z}_3G$ have to be weakly nil-clean rings as being a proper homomorphic images of the former weakly nil-neat ring. However, Lemma~\ref{2.1} informs us that this cannot be happen when $G\not=\{1\}$, so this case is unrealistic, too.

As another argumentation, we may appeal to the fact that $BG$ and $\mathbb{Z}_3G$ are weakly nil-neat, which fact by referring to Lemma~\ref{2.7} will lead to a new promised contradiction, because both components $G_2,G_3$ are trivial.
\end{itemize}

\medskip

"$\Leftarrow$". If point (1) is fulfilled, we have $G=\{1\}$ and thus it is immediate that $RG\cong R$, so we are set. The validity of any of the other points (2) and (3) enables us with the aid of Lemma~\ref{2.1} that the group ring $RG$ is weakly nil-clean and so it is immediately weakly nil-neat, as required.

Now, in treating the more complicated situation (4) when $G$ is the cyclic $2$-group and $R$ is the $3$-elements field, as we already emphasized above, we will have by a straightforward direct inspection that $RG=\mathbb{Z}_3G\cong \mathbb{Z}_3\times \mathbb{Z}_3$, which is a weakly nil-neat ring by consulting with \cite{SD1}.
\end{proof}

%%%%%%%%%%%%%%%%%%%%%%%%%%%%%%%%%%%%%%%%%%%
% References
%%%%%%%%%%%%%%%%%%%%%%%%%%%%%%%%%%%%%%%%%%%
\bibliographystyle{amsplain}
%%%%%%%%%%%%%%%%%%%%%%%%%%%%%%%%%%%%%%%%%%%
% Please cite your relevant papers but at most total 5 papers/books.
%%%%%%%%%%%%%%%%%%%%%%%%%%%%%%%%%%%%%%%%%%%

\vskip3pc

\vskip2pc

\noindent{\bf Mahdi Samiei}\\
Department of Mathematics, Velayat University, Iranshahr, Iran.\\
{\tt E-mail: m.samiei@velayat.ac.ir; m.samiei91@yahoo.com}

\medskip

\noindent{\bf Peter Danchev}\\
Institute of Mathematics and Informatics, Bulgarian Academy of Sciences, Sofia, Bulgaria.\\
{\tt E-mail: danchev@math.bas.bg; pvdanchev@yahoo.com}

\end{document}